\numberwithin{equation}{section}
\newtheorem{theorem}{Theorem}[section]
\newtheorem{proposition}[theorem]{Proposition}
\newtheorem{remark}[theorem]{Remark}
\def\section{\@startsection {section}{1}{\z@}{3.25ex plus 1ex minus
		.2ex}{1.5ex plus .2ex}{\large\bf}}
\def\subsection{\@startsection{subsection}{2}{\z@}{3.25ex plus 1ex minus
		.2ex}{1.5ex plus .2ex}{\normalsize\bf}}
\title{On a new method for the stochastic perturbation of the disease transmission coefficient in SIS Models}
\author{Alberto Lanconelli\thanks{Dipartimento di Scienze Statistiche Paolo Fortunati, Università di Bologna, Bologna, Italy. \textbf{e-mail}: alberto.lanconelli2@unibo.it} \and  Berk Tan Perçin\thanks{Dipartimento di Fisica e Astronomia Augusto Righi, Università di Bologna, Bologna, Italy. \textbf{e-mail}: berktan.percin@studio.unibo.it }}
\date{\today}
\begin{document}
	
	\maketitle
	
	\bigskip
	
	\begin{abstract}
		In this study we investigate a novel approach to stochastically perturb the disease transmission coefficient, which is a key parameter in susceptible-infected-susceptible (SIS) models. Motivated by the papers \cite{Gray et. al.} and \cite{BL}, we perturb the disease transmission coefficient with a Gaussian white noise, formally modelled as the time derivative of a mean reverting Ornstein-Uhlenbeck process. We remark that, thanks to a suitable representation of the solution to the deterministic SIS model, this perturbation is rigorous and supported by a Wong-Zakai approximation argument that consists in smoothing the singular Gaussian white noise and then taking limit of the solution from the approximated model. We prove that the stochastic version of the classic SIS model obtained this way preserves a crucial feature of the deterministic equation: the reproduction number dictating the two possible asymptotic regimes for the infection, i.e. extinction and persistence, remains unchanged. We then identify the class of perturbing noises for which this property holds and propose simple sufficient conditions for that.
		All the theoretical discoveries are illustrated and discussed with the help of several numerical simulations. 
	\end{abstract}
	
Key words and phrases: SIS epidemic model, It\^o and Stratonovich stochastic differential equations, Wong-Zakai approximation, extinction, persistence. \\

AMS 2000 classification: 60H10, 60H30, 92D30.

\allowdisplaybreaks
	
\section{Introduction}

There are key biological parameters to analyse in epidemiology. The pathogen's type, infection target, pathogen's vulnerability to immune system, contagiousness and its life time outside of the host are certainly some of these key parameters. However, they are not enough to fully describe how an epidemic evolves. A key element, which makes epidemiology a very complex area, is the interaction inside the population and its unpredictability. 
\par To address this feature several mathematical models have been proposed and analysed in the literature: see e.g. \cite{Gnorrhea, Gray et. al., Vaccination Paper, SIRS Paper} and the references quoted there. In the classic susceptible-infected-susceptible (SIS) model there is no long term immunity for the infection. One example of such case is Gonorrhea, as described in \cite{Gnorrhea}, where individuals who are recovered from the infection become infected again and again. This means that long term immunity is not effective in prevention from re-infection. 
\par From a mathematical point of view, the SIS model (see e.g. \cite{Brauer}) is a very handy tool that describes the average evolution of an infection with no immunity. It consists of the system of ordinary differential equations
\begin{align}\label{Deterministic SIS Model}
\begin{cases}
\frac{dS(t)}{dt}=\mu N-\beta S(t)I(t)+\gamma I(t)-\mu S(t),& S(0)=s_0\in]0,N[;\\
\frac{dI(t)}{dt}=\beta S(t)I(t)-(\mu+\gamma) I(t),& I(0)=i_0\in]0,N[,
\end{cases}
\end{align}
where $S(t)$ and $I(t)$ denote the number of \emph{susceptibles} and \emph{infecteds} at time $t$, respectively. Here, $N:=s_0+i_0$ is the initial size of the population amongst whom the disease is spreading, $\mu$ denotes the per capita death rate, $\gamma$ is the rate at which infected individuals become cured and $\beta$ stands for the disease transmission coefficient. If we sum the equations in (\ref{Deterministic SIS Model}), we get that 
\begin{align*}
\frac{d}{dt}(S(t)+I(t))=\mu(N-(S(t)+I(t))), \quad S(0)+I(0)=N,
\end{align*}
which yields
\begin{align*}
S(t)+I(t)=S(0)+I(0)=N,\quad\mbox{ for all $t\geq 0$}.
\end{align*}
Therefore, system (\ref{Deterministic SIS Model}) can be reduced to the differential equation
\begin{align}\label{SIS one}
\frac{dI(t)}{dt}=\beta I(t)(N-I(t))-(\mu+\gamma) I(t),\quad I(0)=i_0\in ]0,N[,
\end{align}
with $S(t):=N-I(t)$. Furthermore, equation (\ref{SIS one}) can be solved explicitly as
\begin{align}\label{Deterministic Solution I(t)}
I(t)=\frac{i_0e^{[\beta N-(\mu+\gamma)]t}}{1+\beta\int_0^ti_0e^{[\beta N-(\mu+\gamma)]s}ds},\quad t\geq 0.
\end{align}
This explicit representation easily identifies two different asymptotic regimes for the solution $I(t)$, namely
\begin{align}\label{asymptotic determinstic}
\lim_{t\to+\infty}I(t)=
\begin{cases}
0,&\mbox{ if $R^D_0\leq 1$};\\
N\left(1-\frac{1}{R_0^D}\right),&\mbox{ if $R^D_0>1$},
\end{cases}
\end{align}
where 
\begin{align*}
R^D_0:=\frac{\beta N}{\mu+\gamma}.
\end{align*} 
This ratio is known as \emph{basic reproduction number} of the infection and determines whether the disease will become extinct, i.e. $I(t)$ will tend to zero as $t$ goes to infinity, or will be persistent, i.e. $I(t)$ will tend to a positive limit as $t$ increases.

\par As seen from \eqref{Deterministic Solution I(t)} and identity $S(t)=N-I(t)$, the infected and susceptible populations will have \emph{smooth} flows to each other with rates specified by the model. This description is very good at yielding overall results for a population but it fails to capture its heterogeneity, hence realism. In reality, individuals have different recovering rates or get infected at different rates, thus creating a distortion in the smooth flow between the two populations. To describe this randomness, several approaches have been proposed in the literature.\\ One example is the approach of Allen \cite{Allen}: here one starts with a discrete Markov chain whose transition probabilities reflect the dynamical behaviour of the deterministic model; then, via a suitable scaling on the one-step transition probability, one obtains a forward Fokker-Planck equation which is canonically associated with a stochastic differential equation.\\
Another common method for introducing stochasticity is the so-called \emph{parameter perturbation} approach \cite{Gray et. al., Feller's Test, Chinese Mean Reverting Paper}: it amounts at perturbing one of the parameters of the model equation with a suitable source of randomness, usually a Gaussian white noise. One of the most representative papers in this direction is \cite{Gray et. al.}: here the authors formally perturb equation (\ref{SIS one}), rewritten in the form
\begin{align*}
dI(t)=\beta I(t)(N-I(t))dt-(\mu+\gamma) I(t)dt,\quad I(0)=i_0\in ]0,N[,
\end{align*}
through the replacement 
\begin{align}
\label{Gray Substitution}
\beta dt\mapsto\beta dt + \sigma dB_t
\end{align}
with $\{B_t\}_{t\geq 0}$ being a standard one dimensional Brownian motion and $\sigma$ an additional parameter of the model. Since $\beta$ is a parameter for disease transmission rate, the term $\beta dt$ can be interpreted as number of transmissions in time interval $[t,t+dt]$, as stated in \cite{Gray et. al.}. This way, the authors propose the model
\begin{align}\label{Mao SDE}
dI(t)=[\beta I(t)(N-I(t))-(\mu+\gamma) I(t)]dt+\sigma I(t)(N-I(t))dB(t),
\end{align}
interpreted as an It\^o-type stochastic differential equation, which will encapsulate the randomness in the disease transmission; moreover, they identify a \emph{stochastic reproduction number} 
\begin{align*}
R_0^S:=R^D_0-\frac{\sigma^2N^2}{2(\mu+\gamma)},
\end{align*}
which, in contrast to (\ref{asymptotic determinstic}), characterizes the following asymptotic behaviours: 
\begin{itemize}
	\item if $R_0^S<1$ and $\sigma^2<\frac{\beta}{N}$ or if $\sigma^2>\max\{\frac{\beta}{N},\frac{\beta^2}{2(\mu+\gamma)}\}$, then the \emph{disease will become extinct}, i.e.
	\begin{align*}
	\lim_{t\to+\infty}I(t)=0;
	\end{align*}
	\item if $R_0^S>1$, then \emph{the disease will be persistent}, i.e.
	\begin{align*}
	\liminf_{t\to+\infty}I(t)\leq \xi\leq \limsup_{t\to+\infty}I(t),
	\end{align*}
	where $\xi:=\frac{1}{\sigma^2}\left(\sqrt{\beta^2-2\sigma^2(\mu+\gamma)}-(\beta-\sigma^2N)\right)$.
\end{itemize}
(see also \cite{Feller's Test}). It is worth mentioning that going from (\ref{SIS one}) to (\ref{Mao SDE}), as described in \cite{Gray et. al.}, one has to accept some reasonable but heuristic manipulations of the infinitesimal quantities $dt$ and $dB_t$.  

\par The aim of this paper is to propose a different method for perturbing the disease transmission rate in the SIS model (\ref{SIS one}). Our idea stems from the following simple observation: if we let $\beta$ in (\ref{SIS one}) to be a function of time, then the solution formula (\ref{Deterministic Solution I(t)}) takes the form
\begin{align}\label{I(t) Solution Only Integral of Beta}
I(t)&=\frac{i_0e^{N\int_0^t\beta(s) ds-(\mu+\gamma)t}}{1+i_0\int_0^t\beta(s)e^{N\int_0^s\beta(r)dr -(\mu+\gamma)s}ds}\nonumber\\
&=\frac{i_0e^{N\int_0^t\beta(s) ds-(\mu+\gamma)t}}{1+\frac{i_0}{N}\left(e^{N\int_0^t\beta(r)dr -(\mu+\gamma)t}-1+\int_0^te^{N\int\limits_0^s\beta(r)dr-(\gamma+\mu)s}(\gamma+\mu)ds\right)},
\end{align}
where in the second equality we performed an integration by parts in the denominator.
Equation \eqref{I(t) Solution Only Integral of Beta} now depends on the function $\beta(t)$ only through its integral $\int_0^t\beta(s)ds$. In \cite{BL} the authors utilized this approach to mimic the perturbation proposed in \cite{Gray et. al.}; in this case, the \emph{singular} perturbation 
\begin{align*}
\beta(t)\mapsto\beta(t)+\sigma\frac{dB_t}{dt},
\end{align*}
formally employed on the differential equation (\ref{SIS one}) by the authors in \cite{Gray et. al.}, becomes the well defined transformation 
\begin{align*}
\int_0^t\beta(s)ds\mapsto\int_0^t\beta(s)ds+\sigma B_t,
\end{align*} 
if directly applied on the explicit solution (\ref{I(t) Solution Only Integral of Beta}). As shown in \cite{BL}, this different procedure of parameter perturbation results in an alternative stochastic SIS model which surprisingly exhibits the same asymptotic regimes of its deterministic counterpart (\ref{SIS one}). It is important to remark that this new parameter perturbation approach, which directly acts on the explicit solution (\ref{I(t) Solution Only Integral of Beta}), is also justified via Wong-Zakai theorem at the level of differential equations, thus ruling out the necessity of having a closed form expression for the solution. \\
In the current paper we employ the just mentioned approach to the case where the perturbation is modelled as a mean reverting Ornstein -Uhlenbeck process. This choice is suggested, but not investigated, both in \cite{Allen} and \cite{Gray et. al.}. From a modelling point of view it is motivated by the fact that the variance of a mean reverting Ornstein -Uhlenbeck process is bounded in time, while the one of a Brownian motion, utilized in \cite{Gray et. al.}, is not. This feature seems to be more realistic and hence desirable. However, from a mathematical point of view, the perturbation with a mean reverting Ornstein -Uhlenbeck process makes the analysis of the model more demanding since in this case equation  \eqref{Mao SDE} becomes a stochastic differential equation with random coefficients.\\    
This problem is discussed in \cite{Chinese Mean Reverting Paper} with an approach that follows \cite{Gray et. al.}. Here, we introduce the model working directly on the explicit representation (\ref{I(t) Solution Only Integral of Beta}) and cross-validate the proposal from a differential equations' perspective passing through the Wong-Zakai theorem. We prove that our model fulfils some basic biological constraints, i.e. the solution is global and lives in the interval $]0,N[$ with probability one. Then, we analyse the asymptotic behaviour and discover that the threshold for the different regimes coincides with the one for the deterministic SIS model; in other words, the parameters describing the mean reverting Ornstein -Uhlenbeck process do not play any role in the limiting behaviour of the solution. We also identify a class of perturbations for which this invariance is preserved thus offering a complete analysis of our approach.

The paper is organized as follows: in Section 2 we introduce the model and its cross-validation via the Wong-Zakai theorem; Section 3 is devoted to the analysis of our model: support of the solution, extinction, persistence and discussion of several numerical simulations. Lastly, in Section 4 we address the problem of finding a general class of perturbations for which the results from Section 3 remain the same; numerical simulations are also presented for this enlarged framework.

\section{Stochastic parameter perturbation with a mean reverting Ornstein-Uhlenbeck process}

Let $\{Y_t\}_{t\geq 0}$ be a mean reverting Ornstein-Uhlenbeck process driven by a standard one dimensional Brownian motion $\{B_t\}_{t\geq 0}$; this means that $\{Y_t\}_{t\geq 0}$ is the unique strong solution of the stochastic differential equation
\begin{align}\label{OU}
dY_t=-\alpha Y_tdt+\sigma dB_t,\quad Y_0=0
\end{align} 
where the parameters $\alpha$ and $\sigma$ are positive real number. the process $\{Y_t\}_{t\geq 0}$ can be explicitly represented as
\begin{align}\label{explicit OU}
Y_t=\sigma\int\limits_0^te^{-\alpha(t-s)}dB_s,\quad t\geq 0,
\end{align}
entailing that $Y_t$ is a Gaussian random variable with mean zero and variance $\frac{1-e^{-2\alpha t}}{2\alpha}\sigma^2$. We also recall the ergodic property of $\{Y_t\}_{t\geq 0}$:
\begin{align}\label{ergodic}
\lim_{t\to +\infty}\frac{1}{t}\int_0^tY_sds=0\quad\mbox{almost surely}.
\end{align}
We now perturb (\ref{I(t) Solution Only Integral of Beta}) via the substitution
\begin{align}\label{beta+Y singular}
\beta(t)\mapsto \beta+\frac{dY_t}{dt}
\end{align}
or more rigorously
\begin{align}\label{beta+Y}
\int_0^t\beta(s)ds\mapsto \int_0^t\left(\beta+\frac{dY_s}{ds}\right)ds=\beta t+Y_t.
\end{align}
This gives
\begin{align}\label{Perturbed I(t)}
	\mathtt{I}_t:=\frac{i_0e^{\nu t+NY_t}}{1+\frac{i_0}{N}\left(e^{\nu t+NY_t}-1+\int_0^te^{\nu s+NY_s}(\gamma+\mu)ds\right)},\quad t\geq 0,
\end{align}
where to ease the notation we set $\nu:=N\beta-(\gamma+\mu)$ and $\mathtt{I}_t$ instead of $I(t,Y_t)$; note that $R_0^D\leq 1$ is equivalent to $\nu\leq 0$. The stochastic process (\ref{Perturbed I(t)}) is the object of our investigation. Observe that an application of the It\^o formula gives
\begin{align}\label{dI(t)}
	d\mathtt{I}_t=&\left[\mathtt{I}_t(N-\mathtt{I}_t)\left(\frac{\nu}{N}-\alpha Y_t+\frac{N-2\mathtt{I}_t}{2}\sigma^2\right)-\frac{\gamma+\mu}{N}\mathtt{I}_t^2\right]dt+\sigma\mathtt{I}_t(N-\mathtt{I}_t)dB_t.
\end{align}	
This equation can be considered either as a one dimensional stochastic differential equation with random coefficients (for the presence of $\{Y_t\}_{t\geq 0}$) or, if coupled with (\ref{OU}), as a two dimensional system of stochastic differential equations. The local Lipschitz continuity of the coefficients of such system entails path-wise uniqueness and hence that the couple $\{(\mathtt{I}_t,Y_t)\}_{t\geq 0}$, with $\mathtt{I}_t$ defined in (\ref{Perturbed I(t)}) and $Y_t$ defined in (\ref{explicit OU}), is its unique solution (see e.g. Theorem 2.5, Chapter 5 in \cite{KS}).   

\subsection{Cross-validation of the model via Wong-Zakai theorem}

We obtained the stochastic process (\ref{Perturbed I(t)}) perturbing the explicit solution (\ref{I(t) Solution Only Integral of Beta}) with the transformation (\ref{beta+Y}). One can however derive the stochastic differential equation (\ref{dI(t)}), which is uniquely solved by (\ref{Perturbed I(t)}), through a parameter perturbation procedure acting on the deterministic equation (\ref{SIS one}), which resembles the approach employed in \cite{Gray et. al.}.\\
Let $\{B^{\pi}_t\}_{t\in [0,T]}$ be the polygonal approximation of the Brownian motion $\{B_t\}_{t\in [0,T]}$, relative to the partition $\pi$. This means that $\{B^{\pi}_t\}_{t\in [0,T]}$ is a continuous piecewise linear random function converging to $\{B_t\}_{t\in [0,T]}$ almost surely and uniformly on $[0,T]$, as the mesh of the partition tends to zero. Now, replace (\ref{OU}) with
\begin{align}\label{Wong-Zakai Perturbation}
	\frac{dY^{\pi}_t}{dt} = -\alpha Y^{\pi}_t+\sigma \frac{dB^{\pi}_t}{dt},\quad Y_0^{\pi}=0,
	\end{align}
which gives a smooth approximation of $\{Y_t\}_{t\geq 0}$. Using $\{Y^{\pi}_t\}_{t\geq 0}$ instead of $\{Y_t\}_{t\geq 0}$ allows for a rigorous implementation of the transformation (\ref{beta+Y singular}) in (\ref{SIS one}), that means
\begin{align}\label{SIS one with Y}
\frac{d\mathtt{I}_t^{\pi}}{dt}=&\beta \mathtt{I}_t^{\pi}(N-\mathtt{I}_t^{\pi})-(\mu+\gamma) \mathtt{I}_t^{\pi}+\mathtt{I}_t^{\pi}(N-\mathtt{I}_t^{\pi})\frac{dY^{\pi}_t}{dt}\nonumber\\
=&\beta \mathtt{I}_t^{\pi}(N-\mathtt{I}_t^{\pi})-(\mu+\gamma) \mathtt{I}_t^{\pi}-\alpha \mathtt{I}_t^{\pi}(N-\mathtt{I}_t^{\pi})Y^{\pi}_t\nonumber\\
&+\sigma \mathtt{I}_t^{\pi}(N-\mathtt{I}_t^{\pi})\frac{dB^{\pi}_t}{dt}.
\end{align}
According to the Wong-Zakai Theorem \cite{Wong-Zakai Paper},\cite{Stroock Varadhan} 
the unique solution $\{\mathtt{I}_t^{\pi}\}_{t\in [0,T]}$ of the random ordinary differential equation (\ref{SIS one with Y}) converges, as the mesh of the partition $\pi$ tends to zero, to the solution of the Stratonovich stochastic differential equation 
\begin{align*}
d\mathtt{I}_t=&\left[\beta \mathtt{I}_t(N-\mathtt{I}_t)-(\mu+\gamma) \mathtt{I}_t-\alpha \mathtt{I}_t(N-\mathtt{I}_t)Y_t\right]dt+\sigma \mathtt{I}_t(N-\mathtt{I}_t)\circ dB_t,
\end{align*}
which in turn is equivalent to the It\^o SDE
\begin{align*}
d\mathtt{I}_t=&\left[\beta \mathtt{I}_t(N-\mathtt{I}_t)-(\mu+\gamma) \mathtt{I}_t-\alpha \mathtt{I}_t(N-\mathtt{I}_t)Y_t+\frac{\sigma^2}{2}\mathtt{I}(t)(N-\mathtt{I}(t))(N-2\mathtt{I}(t))\right]dt\\
&+\sigma \mathtt{I}_t(N-\mathtt{I}_t)dB_t.
\end{align*}
The stochastic differential equation above coincides with (\ref{dI(t)}) thus validating our parameter perturbation approach also from a model equation pont of view.

\section{Analysis of the stochastically perturbed SIS Model}

In this section we analyse the stochastic process \eqref{Perturbed I(t)} which we recall to be the unique strong solution of the SDE \eqref{dI(t)}. We will in particular show that such process lives in the interval $]0,N[$, for all $t\geq 0$, almost surely and we will provide sufficient conditions for extinction and persistence.

\subsection{Support of the solution}

We start with the following.

\begin{proposition}\label{I(t) in (0,N)}
	For the stochastic process $\{\mathtt{I}_t\}_{t\geq 0}$ defined in \eqref{Perturbed I(t)} we have 
	\begin{align*}
	\mathbb{P}(\mathtt{I}_t\in ]0,N[)=1,\quad\mbox{for all $t\geq 0$}.
	\end{align*}
\end{proposition}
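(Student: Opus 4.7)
The plan is to exploit the explicit closed-form representation \eqref{Perturbed I(t)} directly: since $\mathtt{I}_t$ is given as a ratio of an obvious positive numerator to an explicit denominator depending only on $i_0$, $N$, $\nu$, $\gamma+\mu$ and the continuous process $\{Y_t\}_{t\geq 0}$, both bounds $\mathtt{I}_t>0$ and $\mathtt{I}_t<N$ should reduce to elementary inequalities that hold pathwise (i.e.\ for every realization of $Y$ with continuous paths), so there is nothing to check in expectation or in probability beyond the trivial fact that almost every trajectory of $Y$ is continuous, hence that the time integral appearing in \eqref{Perturbed I(t)} is finite for all $t\geq 0$.

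For the lower bound, the first step is to rewrite the denominator of \eqref{Perturbed I(t)} in the more transparent form
\begin{align*}
D_t\;:=\;\Bigl(1-\tfrac{i_0}{N}\Bigr)\;+\;\tfrac{i_0}{N}\,e^{\nu t+NY_t}\;+\;\tfrac{i_0(\gamma+\mu)}{N}\int_0^t e^{\nu s+NY_s}\,ds.
\end{align*}
Since $i_0\in\,]0,N[$ the constant term is strictly positive, and the remaining two summands are manifestly nonnegative (in fact strictly positive), so $D_t\geq 1-i_0/N>0$ pathwise; as the numerator $i_0 e^{\nu t+NY_t}$ is also strictly positive, this immediately yields $\mathtt{I}_t>0$ almost surely for every $t\geq 0$.

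For the upper bound, the idea is to compute $N-\mathtt{I}_t$ directly and check that the resulting numerator is strictly positive. Using the rewriting above,
\begin{align*}
N-\mathtt{I}_t\;=\;\frac{N D_t-i_0 e^{\nu t+NY_t}}{D_t}\;=\;\frac{(N-i_0)+i_0(\gamma+\mu)\int_0^t e^{\nu s+NY_s}\,ds}{D_t},
\end{align*}
where the $i_0 e^{\nu t+NY_t}$ terms cancel. Because $N-i_0>0$ and the integral is nonnegative, this quantity is strictly positive, giving $\mathtt{I}_t<N$ pathwise.

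There is no genuine obstacle here: the delicate work was already done in deriving \eqref{Perturbed I(t)}, and the present claim is a structural property of that formula. The only mild point worth flagging is the a.s.\ continuity (and hence local boundedness) of the trajectories of $\{Y_t\}_{t\geq 0}$, which ensures that the integral $\int_0^t e^{\nu s+NY_s}\,ds$ is finite for every $t\geq 0$ along almost every sample path, so that both $\mathtt{I}_t$ and $N-\mathtt{I}_t$ are well-defined strictly positive real numbers on a set of probability one.
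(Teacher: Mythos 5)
Your argument is correct and follows essentially the same route as the paper: both rewrite the explicit formula \eqref{Perturbed I(t)} so that the denominator is a sum of strictly positive terms (yours is the paper's denominator divided by $N$), deduce $\mathtt{I}_t>0$ immediately, and obtain $\mathtt{I}_t<N$ by an elementary pathwise comparison --- the paper drops the $N-i_0$ term and bounds the resulting ratio, while you equivalently compute $N-\mathtt{I}_t$ and observe its numerator is $(N-i_0)+i_0(\gamma+\mu)\int_0^t e^{\nu s+NY_s}\,ds>0$. Your remark about a.s.\ continuity of $Y$ guaranteeing finiteness of the integral is a sensible, if implicit in the paper, addition.
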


\begin{proof}
	First of all, we observe that $\mathtt{I}_t$ can be rewritten as
	\begin{align*}
	\mathtt{I}_t=\frac{Ni_0e^{\nu t+NY_t}}{N-i_0+i_0e^{\nu t+NY_t}+i_0\int_0^te^{\nu s+NY_s}(\gamma+\mu)ds}.
	\end{align*}
	Since by assumption $0<i_0<N$, we see that $\mathtt{I}_t$ is a ratio of almost sure positive quantities; this yields $\mathtt{I}_t>0$ for all $t\geq 0 $ almost surely. On the other hand, the last identity also gives 
	\begin{align*}
	\mathtt{I}_t&<\frac{Ni_0e^{\nu t+NY_t}}{i_0e^{\nu t+NY_t}+i_0\int_0^te^{\nu s+NY_s}(\gamma+\mu)ds}\\
	&=\frac{N}{1+(\gamma+\mu)\int\limits_0^te^{\nu(s-t)+N(Y_s-Y_t)}ds}\\
	&< N.
	\end{align*}
	The proof is complete.
\end{proof}

\subsection{Extinction of the infection}

We now provide a sufficient condition for extinction; remarkably, the parameters describing the stochastic perturbation, i.e. $\alpha$ and $\sigma$, do not play role in that. 

\begin{theorem}\label{Extinction Theorem}
	If $R^D_0=\frac{\beta N}{\gamma+\mu}\leq 1$, or equivalently $\nu=\beta N-(\gamma+\nu)\leq 0$, then 
	\begin{align*}
	\lim_{t\to\infty}\mathtt{I}_t=0\quad\mbox{ almost surely}.
	\end{align*}
\end{theorem}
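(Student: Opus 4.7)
The plan is to work directly with the explicit representation of $\mathtt{I}_t$ rather than with the SDE \eqref{dI(t)}. From the alternative form displayed in the proof of Proposition \ref{I(t) in (0,N)},
\[
\mathtt{I}_t = \frac{N i_0\,e^{\nu t + N Y_t}}{N - i_0 + i_0\,e^{\nu t + N Y_t} + i_0(\gamma+\mu)\int_0^t e^{\nu s + N Y_s}\,ds},
\]
positivity of each denominator summand immediately produces easy upper bounds by retaining only one term at a time. The case $\nu\leq 0$ splits naturally into the strict regime $\nu<0$ and the critical case $\nu=0$, which need to be treated separately.

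When $\nu<0$, keeping only the constant $N-i_0>0$ in the denominator gives
\[
\mathtt{I}_t \leq \frac{N i_0}{N-i_0}\,e^{\nu t + N Y_t},
\]
so it suffices to show $Y_t/t\to 0$ almost surely. This follows from the uniform bound $\operatorname{Var}(Y_t)\leq \sigma^2/(2\alpha)$: a Borel--Cantelli argument applied to the Gaussian tail of $Y_n/n$ handles integer times, and the explicit form \eqref{explicit OU} allows one to dominate $\sup_{t\in[n,n+1]}|Y_t-Y_n|$ by a random variable with uniformly bounded moments, extending the conclusion to continuous time.

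When $\nu=0$, retaining instead only the integral term in the denominator yields
\[
\mathtt{I}_t \leq \frac{N\,e^{N Y_t}}{(\gamma+\mu)\int_0^t e^{N Y_s}\,ds}.
\]
Here I would invoke the ergodicity of $\{Y_t\}_{t\geq 0}$, whose stationary distribution is $\mathcal{N}(0,\sigma^2/(2\alpha))$, to strengthen \eqref{ergodic} to the exponential functional
\[
\frac{1}{t}\int_0^t e^{N Y_s}\,ds\;\longrightarrow\;e^{N^2\sigma^2/(4\alpha)}\;>\;0\quad\text{almost surely.}
\]
A sharper Gaussian tail estimate combined with a continuous-time Borel--Cantelli bound gives $|Y_t|=O(\sqrt{\log t})$ almost surely, hence $e^{NY_t}=o(t^\varepsilon)$ for every $\varepsilon>0$. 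Dividing numerator and denominator by $t$ then forces the ratio above to vanish almost surely.

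The main obstacle lies in the critical case $\nu=0$: without the exponential $e^{\nu t}$ to do the heavy lifting, one has to exploit that the denominator still grows linearly in $t$ (via ergodicity applied to $e^{NY_s}$) while the numerator only grows sub-polynomially. This requires both an ergodic theorem for the exponential functional of $Y$ and genuine almost-sure control of the sample paths $t\mapsto Y_t$, beyond the convergence in mean furnished by the bounded variance.
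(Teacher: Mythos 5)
Your argument is correct in outline but follows a genuinely different route from the paper. The paper never splits into cases: it applies It\^o's formula to $G(x)=\ln\bigl(x/(N-x)\bigr)$, obtains the integral identity \eqref{G(I)}, divides by $t$, and then needs only the Ces\`aro-type facts $\frac{1}{t}\int_0^tY_s\,ds\to 0$ (the stated ergodic property \eqref{ergodic}) and $B_t/t\to 0$ (strong law of large numbers for martingales) to conclude that $\frac{1}{t}\ln\bigl(\mathtt{I}_t/(N-\mathtt{I}_t)\bigr)$ has negative limsup. You instead work directly on the closed-form expression, which costs you strictly stronger pathwise information on $Y_t$ itself --- namely $Y_t/t\to 0$, the growth bound $|Y_t|=O(\sqrt{\log t})$, and an ergodic theorem for the exponential functional $\frac{1}{t}\int_0^te^{NY_s}\,ds$ --- none of which the paper needs, and each of which requires its own (standard but nontrivial) Borel--Cantelli or ergodicity argument that you only sketch. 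What your route buys in exchange is quantitative decay (exponential for $\nu<0$, essentially $t^{-1+\varepsilon}$ for $\nu=0$) and, more importantly, an honest treatment of the critical case: the paper's proof passes the strict pointwise bound $f(x)<\nu$ through a limsup to claim $\limsup_{t}\frac{1}{t}\int_0^tf(\mathtt{I}_s)\,ds<\nu$, which at $\nu=0$ only legitimately yields $\leq 0$ rather than $<0$, so your separate handling of $\nu=0$ is addressing a real subtlety rather than creating one. Two remarks on your sketch: you do not need the precise limit $e^{N^2\sigma^2/(4\alpha)}$ for the denominator --- Jensen's inequality gives $\frac{1}{t}\int_0^te^{NY_s}\,ds\geq \exp\bigl(\frac{N}{t}\int_0^tY_s\,ds\bigr)\to 1$ directly from \eqref{ergodic}, which is all you use; and the downside of your approach is that, unlike the paper's, it does not transfer to the general perturbations of Section 4, where only $\limsup_{t}Z_t/t\leq 0$ is assumed and no stationarity or Gaussian tail structure is available.
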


\begin{proof}
	We take $G(x):=\ln\left(\frac{x}{N-x}\right)$ for $x\in]0,N[$ and observe that $G$ is a strictly increasing function that maps the interval $]0,N[$ into $]-\infty,+\infty[$. An application of the It\^o formula gives 
	\begin{align*}
	dG(\mathtt{I}_t)=\left[\nu-N\alpha Y_t-(\gamma+\mu)\frac{\mathtt{I}_t}{N-\mathtt{I}_t}\right]dt+\sigma N dB_t,
	\end{align*}
	which corresponds to the integral equation
	\begin{align}\label{G(I)}
		G(\mathtt{I}_t)=\ln\left(\frac{i_0}{N-i_0}\right)-\alpha N\int\limits_0^tY_sds+\int\limits_0^tf(\mathtt{I}_s)ds+\sigma NB_t,
	\end{align}
	where
	\begin{align}\label{f(x)}
		f(x):=\nu-(\gamma+\mu)\frac{x}{N-x},\quad \mbox{ for $x\in]0,N[$}.
	\end{align}
	It is useful to note that $f$ is monotone decreasing on the interval $]0,N[$ and that $f(x)<\nu$, for $x\in]0,N[$. Therefore, from equation \eqref{G(I)} we get
	\begin{align}\label{II}
		\limsup_{t\to\infty}\frac{1}{t}G(\mathtt{I}_t)\leq&\limsup_{t\to\infty}\frac{1}{t}\ln\left(\frac{i_0}{N-i_0}\right)-\lim_{t\to\infty}\frac{\alpha N}{t}\int\limits_0^tY_sds\nonumber\\
		&+\limsup_{t\to\infty}\frac{1}{t}\int\limits_0^tf(\mathtt{I}_s)ds+\sigma N\limsup_{t\to\infty}\frac{B_t}{t}.
	\end{align}
	By recalling (\ref{ergodic}), it is easy to notice that the first two terms in right hand side above are equal to zero. Moreover, by the strong law of large numbers for martingales (see for instance \cite{Mao}) we also have
	\begin{align*}
	\lim_{t\to\infty}\frac{B_t}{t}=0,\quad\mbox{almost surely}.
	\end{align*}
	Therefore, inequality (\ref{II}) now reads
	\begin{align*}
	\limsup_{t\to\infty}\frac{1}{t}G(\mathtt{I}_t)\leq\limsup_{t\to\infty}\frac{1}{t}\int\limits_0^tf(\mathtt{I}_s)ds<\nu,
	\end{align*}
	that means 
	\begin{align*}
	\limsup_{t\to\infty}\frac{1}{t}\ln\left(\frac{\mathtt{I}_t}{N-\mathtt{I}_t}\right)<0,\quad\mbox{almost surely}.
	\end{align*}
	Since the last statement implies our thesis, the proof is complete.
\end{proof}

In the stochastic SIS model obtained by this parameter perturbation method from the deterministic one, the limiting behaviour of both models are the same for $R^D_0\leq1$.

\subsection{Persistence of the Infection}

We now turn to the problem of finding sufficient condition for persistence of the disease. Again, the parameters describing the stochastic perturbation, i.e. $\alpha$ and $\sigma$, do not influence the threshold. 

\begin{theorem}\label{Persistency Theorem}
	If $R^D_0>1$, or equivalently $\nu>0$, then we have with probability one
	\begin{align}\label{persistence}
	\limsup_{t\to\infty}\mathtt{I}_t\geq x^* \quad\mbox{ and } \quad \liminf_{t\to\infty}\mathtt{I}_t\leq x^*,
	\end{align}
	where $x^*=N\left(1-\frac{1}{R^D_0}\right)$.
\end{theorem}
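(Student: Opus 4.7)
The plan is to reuse the transformation $G(x) = \ln(x/(N-x))$ and the integral representation \eqref{G(I)} established in the proof of Theorem \ref{Extinction Theorem}, together with the monotone decreasing function $f$ from \eqref{f(x)}, and then argue by contradiction separately for each of the two inequalities in \eqref{persistence}.

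First I would identify $x^{*}$ as the unique zero of $f$ in $\,]0,N[\,$: using $N-x^{*} = (\gamma+\mu)/\beta$ one checks directly that
\begin{align*}
f(x^{*}) \;=\; \nu - (\gamma+\mu)\frac{x^{*}}{N-x^{*}} \;=\; \nu - \beta\bigl(N-\tfrac{\gamma+\mu}{\beta}\bigr) \;=\; 0,
\end{align*}
so, by the strict monotonicity of $f$, we have $f(x) > 0$ for $x \in \,]0,x^{*}[\,$ and $f(x) < 0$ for $x \in \,]x^{*},N[\,$. Next, dividing \eqref{G(I)} by $t$ and taking limits, I would invoke the ergodic property \eqref{ergodic} for $\{Y_t\}_{t \geq 0}$ and the strong law of large numbers for martingales (exactly as in the proof of Theorem \ref{Extinction Theorem}) to kill the initial term, the $Y$-integral and the Brownian contribution, concluding that almost surely
\begin{align*}
\limsup_{t\to\infty}\frac{G(\mathtt{I}_t)}{t} \;=\; \limsup_{t\to\infty}\frac{1}{t}\int_0^t f(\mathtt{I}_s)\,ds, \qquad \liminf_{t\to\infty}\frac{G(\mathtt{I}_t)}{t} \;=\; \liminf_{t\to\infty}\frac{1}{t}\int_0^t f(\mathtt{I}_s)\,ds.
\end{align*}

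Then I would argue by contradiction. Suppose on a set of positive probability $\limsup_{t\to\infty}\mathtt{I}_t < x^{*}$; on that set pick $\varepsilon>0$ and $T>0$ such that $\mathtt{I}_t \leq x^{*}-\varepsilon$ for $t\geq T$. Since $f$ is monotone decreasing, $f(\mathtt{I}_t) \geq f(x^{*}-\varepsilon) > 0$ eventually, whence
\begin{align*}
\liminf_{t\to\infty}\frac{G(\mathtt{I}_t)}{t} \;=\; \liminf_{t\to\infty}\frac{1}{t}\int_0^t f(\mathtt{I}_s)\,ds \;\geq\; f(x^{*}-\varepsilon) \;>\; 0,
\end{align*}
so $G(\mathtt{I}_t)\to+\infty$ and hence $\mathtt{I}_t \to N$, contradicting $\mathtt{I}_t \leq x^{*}-\varepsilon < N$. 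A symmetric argument applied to the hypothesis $\liminf_{t\to\infty}\mathtt{I}_t > x^{*}$ gives $f(\mathtt{I}_t) \leq f(x^{*}+\varepsilon) < 0$ eventually, forcing $G(\mathtt{I}_t)\to-\infty$, i.e.\ $\mathtt{I}_t\to 0$, again a contradiction.

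The only substantive ingredients are the identity $f(x^{*})=0$ and the cancellation, via the ergodic property and the martingale strong law, of the stochastic contributions in \eqref{G(I)}. The main technical point I would be careful about is that the "eventually" step in each contradiction must live on a common full-measure set with the a.s.\ limits $\lim_{t\to\infty}\frac{1}{t}\int_0^t Y_s\,ds = 0$ and $\lim_{t\to\infty}\frac{B_t}{t} = 0$; since each of these exceptional sets is null, intersecting them with any hypothetical "bad" set of positive probability preserves its positive measure, so the contradiction is valid. No auxiliary moment estimates seem necessary, which is precisely the structural reason why the perturbation parameters $\alpha$ and $\sigma$ do not enter the threshold.
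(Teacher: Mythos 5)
Your proposal is correct and follows essentially the same route as the paper's proof: the logarithmic transformation $G$, the integral identity \eqref{G(I)}, cancellation of the $Y$-integral and Brownian terms via \eqref{ergodic} and the martingale strong law, and a two-sided contradiction argument using the monotonicity of $f$ and $f(x^*)=0$. Your extra remarks (first recasting the limsup/liminf of $G(\mathtt{I}_t)/t$ as that of the $f$-integral, and the observation about intersecting the bad set with the full-measure set where the a.s.\ limits hold) are harmless refinements of the same argument.
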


\begin{proof}
	The assumption of the theorem implies that the function in \eqref{f(x)} has a unique root in $]0,N[$ given by $x^*=N(1-1/R^D_0)$, as shown in figure \ref{f(x) plot nu>0}.
	\begin{figure}[ht!]
		\centering
		\includegraphics[width=0.6\textwidth]{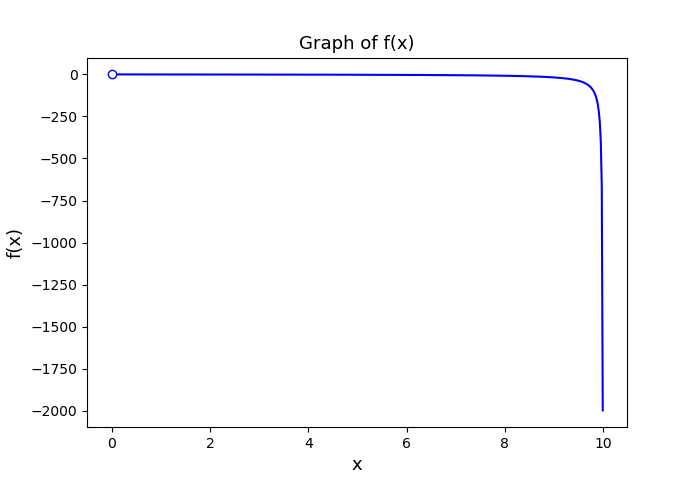}
		\caption{The plot of $f(x)$ from (\ref{f(x)}) with $\nu=20$, $\gamma+\mu=0.2$ and $N=10$}
		\label{f(x) plot nu>0}
	\end{figure}
	\par We follow the proof of Theorem 5.1 in \cite{Gray et. al.}. Assume the first inequality in (\ref{persistence}) to be false. Then, there exists $\varepsilon>0$ such that 
	\begin{align}
		\label{limsup contraty assertion}
		\mathbb{P}(\Omega_1)>\varepsilon\quad\mbox{ where }\quad \Omega_1:=\left\{\limsup_{t\to\infty}\mathtt{I}_t\leq x^*-\varepsilon\right\}.
	\end{align}
Therefore, for all $\omega\in\Omega_1$ there exists $T(\omega)\geq 0$ such that
\begin{align*} 
\mathtt{I}_t\leq x^*-\varepsilon,\quad\mbox{ for all $t\geq T(\omega)$},
\end{align*}
and the monotonicity of $f$ yields
	\begin{align}
		\label{limsup f(x) comparison}
		f(\mathtt{I}_t)\geq f(x^*-\varepsilon) >0,\quad\mbox{ for all $t\geq T(\omega)$}.
	\end{align}
Therefore, using identities (\ref{G(I)}), (\ref{ergodic}) and the strong law of large numbers for martingales, we can write for all $\omega\in\Omega_1$ that
	\begin{align*}
	\liminf_{t\to\infty}\frac{1}{t}G(\mathtt{I}_t)\geq&\liminf_{t\to\infty}\frac{1}{t}\ln\left(\frac{i_0}{N-i_0}\right)-\lim_{t\to\infty}\frac{\alpha N}{t}\int\limits_0^tY_sds\\
	&+\liminf_{t\to\infty}\frac{1}{t}\int\limits_0^tf(\mathtt{I}_s)ds+N\sigma\lim_{t\to\infty}\frac{B_t}{t}\\
	=&\liminf_{t\to\infty}\frac{1}{t}\int\limits_0^tf(\mathtt{I}_s)ds\\
	\geq&\liminf_{t\to\infty}\frac{1}{t}\int\limits_0^Tf(\mathtt{I}_s)ds+f(x^*-\varepsilon)\liminf_{t\to\infty}\frac{t-T}{t}
	\end{align*}
	and hence
	\begin{align*}
	\liminf_{t\to\infty}\frac{1}{t}\ln\left(\frac{\mathtt{I}_t}{N-\mathtt{I}_t}\right)\geq f(x^*-\varepsilon)>0.
	\end{align*}
	This gives $\lim\limits_{t\to\infty}\mathtt{I}_t=N$ which contradicts
	\eqref{limsup contraty assertion}.
	\par To prove the second inequality in (\ref{persistence}) we proceed as before and assume that there exists $\varepsilon>0$ such that
	\begin{align}
		\label{liminf contraty assertion}
		\mathbb{P}(\Omega_2)>\varepsilon\quad\mbox{ where }\quad \Omega_2=\left\{\liminf_{t\to\infty}\mathtt{I}_t\geq x^*+\varepsilon\right\}.
	\end{align}
	Therefore, for all $\omega\in\Omega_2$, there exists $T(\omega)\geq 0$ such that 
	\begin{align*}
	\mathtt{I}_t\geq x^*-\varepsilon,\quad\mbox{ for all $t\geq T(\omega)$}.
	\end{align*}
	The monotonicity of $f$ gives
	\begin{align}
		\label{liminf f(x) comparison}
		f(\mathtt{I}_t)\leq f(x^*-\varepsilon) <0,\quad \mbox{ for all $t\geq T(\omega)$},
	\end{align}
	and 
	\begin{align*}
	\limsup_{t\to\infty}\frac{1}{t}\ln\left(\frac{\mathtt{I}_t}{N-\mathtt{I}_t}\right)\leq\limsup_{t\to\infty}\frac{1}{t}\int\limits_0^tf(\mathtt{I}_s)ds\leq f(x^*+\varepsilon)<0.
	\end{align*}
	This implies $\lim\limits_{t\to\infty}\mathtt{I}_t=0$, contradicting \eqref{liminf contraty assertion}.
\end{proof}

\subsection{Trajectory simulations}
In this section we present various simulations; we consider two different values of $\sigma$ to emphasize that, according to our theoretical  results, the limiting behaviour of the solution doesn't depend on them.\\
For the first example, we set $N=200$, $i_0=100$, $\beta=0.06$, $\gamma+\mu=14$, $\alpha=0.4$, so that $R^D_0=0.8$ and $\nu=-2$. According to Theorem \ref{Extinction Theorem} the infection should extinct almost surely. See Figure \ref{R=0.8 nu=-2}.
\begin{figure}[h!]
	\centering
	\begin{subfigure}[c]{0.4\textwidth}
		\includegraphics[width=\linewidth]{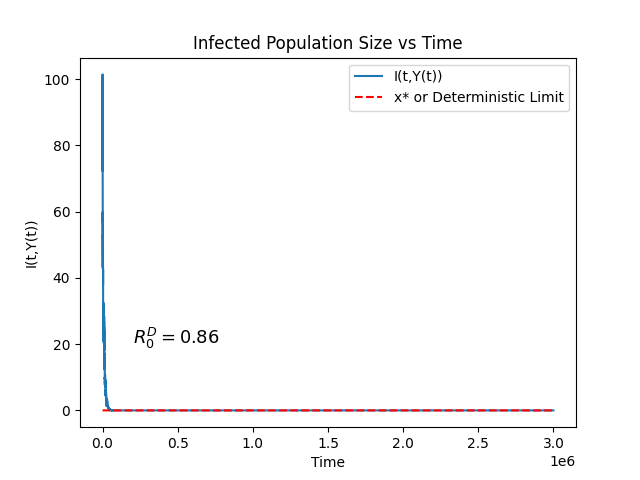}
		\caption{$\sigma=0.005$}
	\end{subfigure}
	\begin{subfigure}[c]{0.4\textwidth}
		\includegraphics[width=\linewidth]{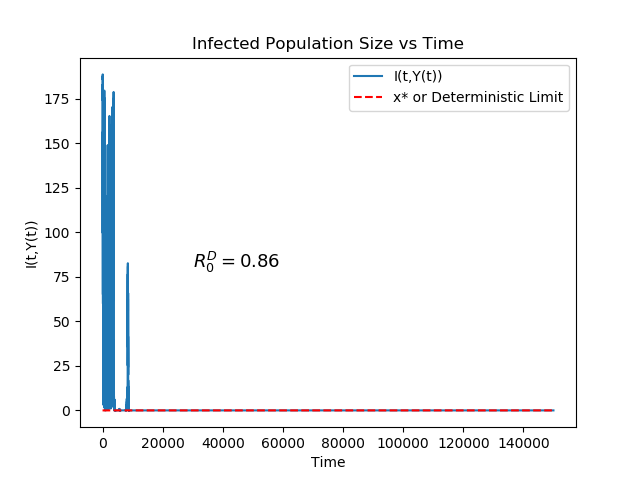}
		\caption{$\sigma=0.05$}
	\end{subfigure}
	\caption{The plot with parameters $N=200$, $i_0=100$, $\alpha=0.4$, $R^D_0=0.857$ and hence $\nu=-2$. The label for y-axis $I(t,Y_t)$ stands for $\mathtt{I}_t$.}
	\label{R=0.8 nu=-2}
\end{figure}
\begin{figure}[h!]
	\centering
	\begin{subfigure}[c]{0.4\textwidth}
		\includegraphics[width=\linewidth]{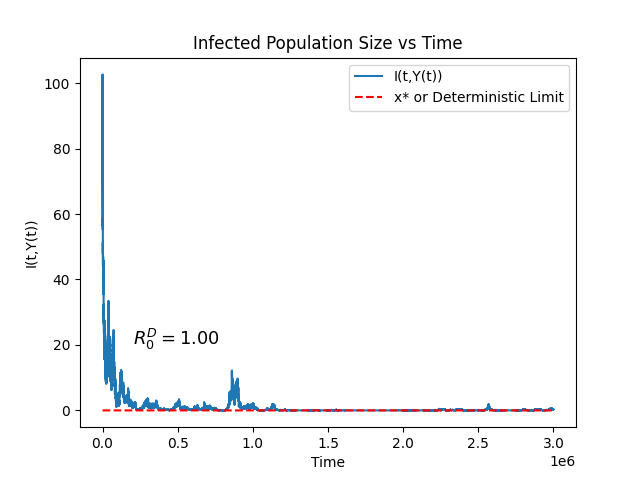}
		\caption{$\sigma=0.005$}
	\end{subfigure}
	\begin{subfigure}[c]{0.4\textwidth}
		\includegraphics[width=\linewidth]{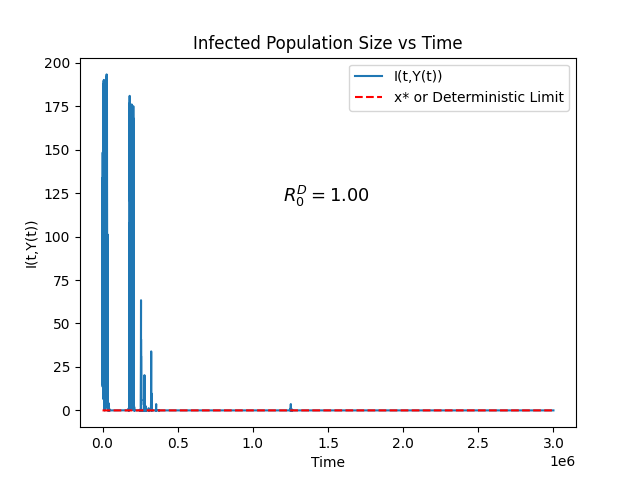}
		\caption{$\sigma=0.05$}
	\end{subfigure}
	\caption{The plot with parameters $N=200$, $i_0=100$, $\alpha=0.4$, $R^D_0=1$ and hence $\nu=0$. The label for y-axis $I(t,Y_t)$ stands for $\mathtt{I}_t$.}
	\label{R=1 nu=0}
\end{figure}
\begin{figure}[h!]
	\centering
	\begin{subfigure}[c]{0.4\textwidth}
		\includegraphics[width=\linewidth]{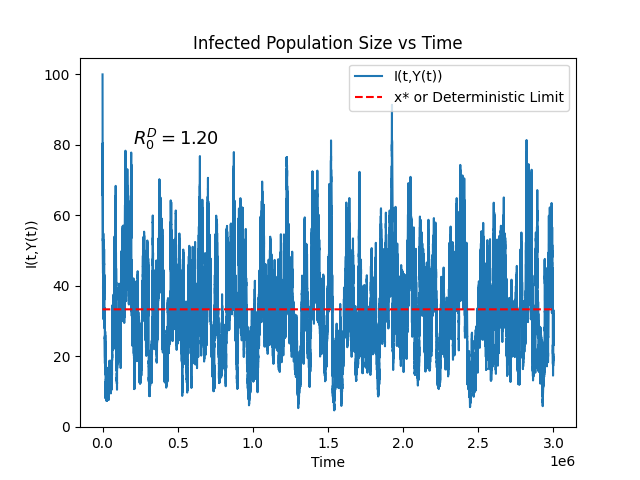}
		\caption{$\sigma=0.005$}
	\end{subfigure}
	\begin{subfigure}[c]{0.4\textwidth}
		\includegraphics[width=\linewidth]{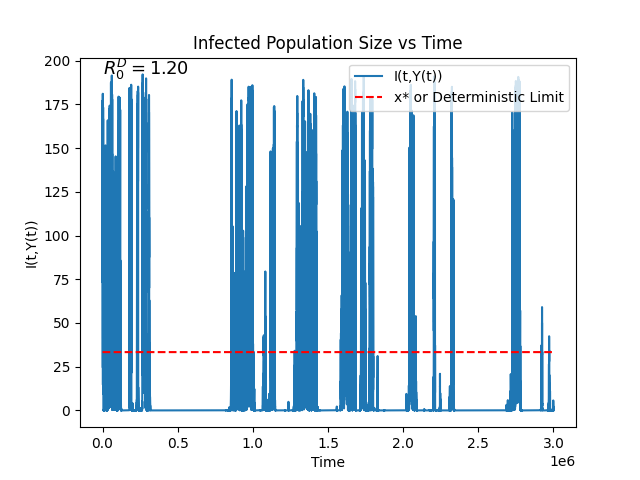}
		\caption{$\sigma=0.05$}
		\label{Gray non-valid condition}
	\end{subfigure}
	\caption{The plot with parameters $N=200$, $i_0=100$, $\alpha=0.4$ and $R^D_0=1.2$ so $\nu=2$. The deterministic limit is 33. The label for y-axis $I(t,Y_t)$ stands for $\mathtt{I}_t$.}
	\label{R=1.2 nu=2}
\end{figure}
\par Next, the same simulation is performed with a different set of parameters. We take $N=200$, $i_0=100$, $\beta=0.06$, $\gamma+\mu=12$, $\alpha=0.4$ which gives $R^D_0=1$ and $\nu=0$. According to Theorem \ref{Extinction Theorem} the infection should extinct almost surely. See Figure \ref{R=1 nu=0}.
\par Lastly, we consider $N=200$, $i_0=100$, $\beta=0.06$, $\gamma+\mu=10$, $\alpha=0.4$ so $R^D_0=1.2$ and $\nu=2$. According to Theorem \ref{Extinction Theorem} the infection should persist a.s., or more precisely it should oscillate above and below the deterministic limit. See Figure \ref{R=1.2 nu=2} where fluctuations above and below the deterministic limit $x^*$ is visible as well.

\section{Stochastic parameter perturbation with a general process}

In this section we try to understand to which extent the results of the previous sections are determined by the choice of the particular perturbation $\{Y_t\}_{t\geq 0}$ in (\ref{OU}). To this aim we consider $\{Z_t\}_{t\geq 0}$, solution of the stochastic differential equation
\begin{align}\label{Z}
dZ_t=b(t,Z_t)dt+\sigma dB_t,\quad Z_0=0,
\end{align}
where $\sigma$ is positive real number and the function $b:[0,T]\times\mathbb{R}$ is assumed to be globally Lipschitz continuous in $z$, uniformly in $t$; we also introduce the corresponding perturbation of the parameter $\beta$, namely 
\begin{align}\label{beta+Z}
\int_0^t\beta(s)ds\mapsto \int_0^t\left(\beta+\frac{dZ_s}{ds}\right)ds=\beta t+Z_t.
\end{align}
If we employ such transformation in (\ref{I(t) Solution Only Integral of Beta}), which is the solution of the deterministic SIS model with a time dependent transmission coefficient $\beta$, we get the stochastic process
\begin{align}\label{Perturbed Z(t)}
\mathcal{I}_t:=\frac{i_0e^{\nu t+NZ_t}}{1+\frac{i_0}{N}\left(e^{\nu t+NZ_t}-1+\int_0^te^{\nu s+NZ_s}(\gamma+\mu)ds\right)},\quad t\geq 0.
\end{align}
where to ease the notation we set $\mathcal{I}_t$ instead of $I(t,Z_t)$. Moreover, an application of the It\^o formula yields
\begin{align}\label{dI(Z)}
d\mathcal{I}_t=&\left[\mathcal{I}_t(N-\mathcal{I}_t)\left(\frac{\nu}{N}+b(t,Z_t)+\frac{\sigma^2N}{2}-\sigma^2\mathcal{I}_t\right)-\frac{\gamma+\mu}{N}\mathcal{I}_t^2\right]dt\nonumber\\
&+\sigma\mathcal{I}_t(N-\mathcal{I}_t)dB_t
\end{align}

\begin{remark}
The restriction to constant diffusion coefficients imposed in \eqref{Z} is due the necessity of cross validating the model (\ref{Perturbed Z(t)}) also from a differential equations' point of view. In fact, if we smooth the process $\{Z_t\}_{t\geq 0}$ as 
\begin{align*}
\frac{dZ^{\pi}_t}{dt}=b(t,Z^{\pi}_t)+\sigma \frac{dB^{\pi}_t}{dt},\quad Z^{\pi}_0=0,
\end{align*}
and perturb correspondingly the parameter $\beta$ in equation (\ref{SIS one}), we obtain
\begin{align*}
\frac{d\mathcal{I}_t^{\pi}}{dt}=&\beta \mathcal{I}_t^{\pi}(N-\mathcal{I}_t^{\pi})-(\mu+\gamma) \mathcal{I}_t^{\pi}+b(t,Z^{\pi}_t)\mathcal{I}_t^{\pi}(N-\mathcal{I}_t^{\pi})\nonumber\\
&+\sigma \mathcal{I}_t^{\pi}(N-\mathcal{I}_t^{\pi})\frac{dB^{\pi}_t}{dt}.
\end{align*}
According to the Wong-Zakai Theorem the unique solution $\{\mathcal{I}_t^{\pi}\}_{t\in [0,T]}$ of the random ordinary differential equation above converges, as the mesh of the partition $\pi$ tends to zero, to the solution of the Stratonovich stochastic differential equation 
\begin{align*}
d\mathcal{I}_t=&\left[\beta \mathcal{I}_t(N-\mathcal{I}_t)-(\mu+\gamma) \mathcal{I}_t+b(t,Z_t)\mathcal{I}_t(N-\mathcal{I}_t)\right]dt+\sigma \mathcal{I}_t(N-\mathcal{I}_t)\circ dB_t,
\end{align*}
which in turn is equivalent to the It\^o SDE
\begin{align*}
d\mathcal{I}_t=&\left[\beta \mathcal{I}_t(N-\mathcal{I}_t)-(\mu+\gamma) \mathcal{I}_t+b(t,Z_t) \mathcal{I}_t(N-\mathcal{I}_t)+\frac{\sigma^2}{2}\mathcal{I}(t)(N-\mathcal{I}(t))(N-2\mathcal{I}(t))\right]dt\\
&+\sigma \mathcal{I}_t(N-\mathcal{I}_t)dB_t.
\end{align*}
This SDE coincides with (\ref{dI(Z)}) thus validating that model. If we allow $\sigma$ in (\ref{Z}) to depend also on $Z$, then this match wouldn't take place for the presence of an additional drift term in the equation for $\{Z_t\}_{t\geq 0}$.
\end{remark}

We now start to analyse the properties of $\{\mathcal{I}_t\}_{t\geq 0}$ by stating the analogue of Proposition \ref{I(t) in (0,N)}.

\begin{proposition}
	For the stochastic process $\{\mathcal{I}_t\}_{t\geq 0}$ defined in \eqref{Perturbed Z(t)} we have 
	\begin{align*}
	\mathbb{P}(\mathcal{I}_t\in ]0,N[)=1,\quad\mbox{for all $t\geq 0$}.
	\end{align*}
\end{proposition}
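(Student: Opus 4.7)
The plan is to simply mimic the proof of Proposition \ref{I(t) in (0,N)}: the argument there never used any specific feature of the Ornstein–Uhlenbeck process $\{Y_t\}_{t\geq 0}$ beyond the pathwise continuity needed to make the exponentials and the Lebesgue integral $\int_0^t e^{\nu s+NY_s}(\gamma+\mu)\,ds$ well defined. The process $\{Z_t\}_{t\geq 0}$ defined by \eqref{Z} is the unique strong solution of an SDE whose drift is globally Lipschitz and whose diffusion is constant, hence its paths are continuous almost surely, and the same structural manipulations will go through.

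The first step is to clear the inner fraction in \eqref{Perturbed Z(t)} by multiplying numerator and denominator by $N$, obtaining the rewriting
\begin{align*}
\mathcal{I}_t=\frac{Ni_0e^{\nu t+NZ_t}}{N-i_0+i_0e^{\nu t+NZ_t}+i_0(\gamma+\mu)\int_0^te^{\nu s+NZ_s}ds}.
\end{align*}
The next step is to argue that almost surely all four summands in the denominator are nonnegative and at least one is strictly positive: the constant $N-i_0$ is strictly positive by the assumption $0<i_0<N$, while $i_0e^{\nu t+NZ_t}$ and the integrand $e^{\nu s+NZ_s}$ are strictly positive for every $s,t$ and every sample path of $Z$. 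The numerator $Ni_0e^{\nu t+NZ_t}$ is likewise almost surely strictly positive, so $\mathcal{I}_t>0$ almost surely for every $t\geq 0$.

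For the upper bound, I would drop the strictly positive term $N-i_0$ from the denominator and then cancel the factor $i_0e^{\nu t+NZ_t}$ from numerator and denominator, which yields
\begin{align*}
\mathcal{I}_t<\frac{N}{1+(\gamma+\mu)\int_0^te^{\nu(s-t)+N(Z_s-Z_t)}ds}<N,
\end{align*}
the last inequality following because the remaining integral term in the denominator is almost surely strictly positive. Combining both estimates gives $\mathbb{P}(\mathcal{I}_t\in\,]0,N[)=1$ for every $t\geq 0$.

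I do not foresee a real obstacle: the whole argument is purely algebraic together with pathwise positivity of exponentials, and it requires only that $\{Z_t\}_{t\geq 0}$ admits a version with continuous paths, which is granted by the standard well-posedness theory under the globally Lipschitz assumption on $b$. In particular no moment, ergodic, or distributional property of $\{Z_t\}_{t\geq 0}$ is invoked, so replacing $Y_t$ by $Z_t$ is completely transparent at this stage.
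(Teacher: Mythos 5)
Your proof is correct and is essentially the paper's own argument: the paper simply remarks that the proof of Proposition \ref{I(t) in (0,N)} carries over verbatim with $Y_t$ replaced by $Z_t$, and you have written out exactly that adaptation, correctly noting that only pathwise continuity and positivity of the exponentials are needed.
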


\begin{proof}
	Looking through the proof of Proposition \ref{I(t) in (0,N)} one easily see that 
	the same conclusion holds for $\{\mathcal{I}_t\}_{t\geq 0}$.
\end{proof}

The next theorem provides a sufficient condition on the stochastic process $\{Z_t\}_{t\geq 0}$ which guarantees extinction for $\{\mathcal{I}_t\}_{t\geq 0}$.

\begin{theorem}\label{Generalization Extinction Theorem}
	Assume that $R^D_0=\frac{\beta N}{\gamma+\mu}\leq 1$, or equivalently $\nu=\beta N-(\gamma+\nu)\leq 0$. If
	\begin{align}\label{extinction condition}
	\limsup_{t\to\infty}\frac{Z_t}{t}\leq 0,\quad\mbox{ almost surely},
	\end{align}
	then 
	\begin{align*}
	\lim_{t\to\infty}\mathcal{I}_t=0\quad\mbox{ almost surely}.
	\end{align*}
\end{theorem}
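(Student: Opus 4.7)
The strategy mirrors that of Theorem \ref{Extinction Theorem}: I would study $G(\mathcal{I}_t):=\ln(\mathcal{I}_t/(N-\mathcal{I}_t))$ and show that $\limsup_{t\to\infty}G(\mathcal{I}_t)/t<0$ almost surely, which is equivalent to $\mathcal{I}_t\to 0$. The only piece that really needs reworking is the identity expressing $G(\mathcal{I}_t)$ in terms of the driver $Z_t$ rather than in terms of the OU process together with its underlying Brownian motion.

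The key step is to establish the identity
\begin{equation*}
G(\mathcal{I}_t) = G(i_0) + N Z_t + \int_0^t f(\mathcal{I}_s)\,ds,
\end{equation*}
with $f$ as in \eqref{f(x)}. I would derive it by applying It\^o's formula to $G(\mathcal{I}_t)$ using \eqref{dI(Z)}, with $G'(x)=N/[x(N-x)]$ and $G''(x)=-1/x^2+1/(N-x)^2$: the $\sigma^2$-dependent pieces in the drift of \eqref{dI(Z)} combine with the It\^o correction $\tfrac12 G''(\mathcal{I}_t)\sigma^2\mathcal{I}_t^2(N-\mathcal{I}_t)^2=\sigma^2N\mathcal{I}_t-\tfrac12\sigma^2 N^2$ to cancel completely, leaving $dG(\mathcal{I}_t)=[Nb(t,Z_t)+f(\mathcal{I}_t)]dt+\sigma N\,dB_t$, the exact analogue of \eqref{G(I)}. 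Integrating and using \eqref{Z} in the form $Z_t=\int_0^t b(s,Z_s)\,ds+\sigma B_t$ to merge the drift and martingale contributions into $NZ_t$ yields the claim. As a sanity check, the same identity can be read off directly from the explicit representation \eqref{Perturbed Z(t)} by computing $\ln(\mathcal{I}_t/(N-\mathcal{I}_t))$ and differentiating in $t$.

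From here the conclusion proceeds exactly as in Theorem \ref{Extinction Theorem}. Dividing by $t$ and taking $\limsup$,
\begin{equation*}
\limsup_{t\to\infty}\frac{G(\mathcal{I}_t)}{t}\leq N\limsup_{t\to\infty}\frac{Z_t}{t}+\limsup_{t\to\infty}\frac{1}{t}\int_0^t f(\mathcal{I}_s)\,ds.
\end{equation*}
The first term on the right-hand side is $\leq 0$ by the hypothesis \eqref{extinction condition}, which in this general setting plays the role jointly performed by the ergodic property \eqref{ergodic} of $\{Y_t\}_{t\geq 0}$ and the strong law for Brownian motion in Theorem \ref{Extinction Theorem}. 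The second term is controlled via the pointwise inequality $f(x)<\nu\leq 0$ on $(0,N)$ used in the proof of that theorem, giving $\limsup_{t\to\infty}G(\mathcal{I}_t)/t<0$ almost surely, and hence $\mathcal{I}_t\to 0$ almost surely.

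The main obstacle I anticipate is the bookkeeping in the It\^o computation: the drift of \eqref{dI(Z)} carries three distinct $\sigma^2$-dependent terms which must cancel precisely against $\tfrac12 G''(\mathcal{I}_t)(\sigma\mathcal{I}_t(N-\mathcal{I}_t))^2$ for the clean structure $Nb(t,Z_t)+f(\mathcal{I}_t)$ to emerge. Conceptually, however, the argument is identical to the OU case; the added generality is entirely absorbed by the single structural hypothesis \eqref{extinction condition} on the driver $\{Z_t\}_{t\geq 0}$, which is itself what the OU process delivers for free via ergodicity and the law of large numbers for $B_t$.
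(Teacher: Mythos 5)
Your proposal is correct and follows essentially the same route as the paper: the paper's proof applies It\^o's formula to $G(y)=\ln(y/(N-y))$, obtains exactly the identity $G(\mathcal{I}_t)=\ln(i_0/(N-i_0))+\int_0^t f(\mathcal{I}_s)\,ds+NZ_t$ (the $\sigma^2$ cancellation you work out explicitly is left implicit there), and concludes via $f<\nu\leq 0$ together with $\limsup_{t\to\infty}Z_t/t\leq 0$. No substantive difference.
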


\begin{proof}
	We take $G(y):=\ln\left(\frac{y}{N-y}\right)$, for $y\in]0,N[$; an application of the It\^o formula gives 
	\begin{align*}
		dG(\mathcal{I}_t)=\left[\nu-(\gamma+\mu)\frac{\mathcal{I}_t}{N-\mathcal{I}_t}\right]dt+NdZ_t.
	\end{align*}
	this yields
	\begin{align*}
	G(\mathcal{I}_t)=\ln\left(\frac{i_0}{N-i_0}\right)+\int\limits_0^tf(\mathcal{I}_s) ds+NZ_t,
	\end{align*}
	where $f$ is defined as (\ref{f(x)}) again. By utilizing the monotonicity of $f$, we get
    \begin{align*}
	\limsup_{t\to\infty}\frac{1}{t}G(\mathcal{I}_t)<\nu+N\limsup_{t\to\infty}\frac{Z_t}{t}.
	\end{align*}
	Now, if 
	\begin{align*}
	\limsup_{t\to\infty}\frac{Z_t}{t}\leq 0,\quad\mbox{ almost surely},
	\end{align*}
	then $\nu\leq0$ implies immediately that 
	\begin{align*}
	\limsup_{t\to\infty}\frac{1}{t}\ln\left(\frac{\mathcal{I}_t}{N-\mathcal{I}_t}\right)<0.
	\end{align*}
	The last inequality entails the statement of our theorem and competes the proof.
\end{proof}

We now state the analogue of Theorem \ref{Persistency Theorem}.

\begin{theorem}\label{Generalized Persistence Theorem}
	Assume that $R^D_0>1$, or equivalently $\nu>0$. If
	\begin{align}\label{assumption Z}
	0\leq\liminf_{t\to\infty}\frac{Z_t}{t}\leq\limsup_{t\to\infty}\frac{Z_t}{t}<+\infty,
	\end{align}
	then the infection is persistent. More precisely, for all $x\in ]0,N[$ we have
	\begin{align}
	\liminf_{t\to \infty}\mathcal{I}_t\leq x\leq \limsup_{t \to\infty}\mathcal{I}_t,\quad\mbox{almost surely}.
	\end{align}
\end{theorem}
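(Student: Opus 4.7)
The plan is to transfer, step by step, the argument of Theorem \ref{Persistency Theorem} to this more general setting. The starting point is the integral identity
$$G(\mathcal{I}_t) = \ln\left(\frac{i_0}{N - i_0}\right) + \int_0^t f(\mathcal{I}_s)\,ds + N Z_t,$$
with $G(y)=\ln(y/(N-y))$ and $f$ as in \eqref{f(x)}, obtained from Ito's formula exactly as in the proof of Theorem \ref{Generalization Extinction Theorem} but now exploited under the standing hypothesis $\nu > 0$. The advantage of this identity is that the $Z$-dependence is isolated in a single linear term, so the only information about $\{Z_t\}_{t\geq 0}$ entering the asymptotics is its time-averaged behavior captured by \eqref{assumption Z}.

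Fix $x \in \,]0,N[\,$ and focus first on the claim $\limsup_{t\to\infty}\mathcal{I}_t \geq x$. I would argue by contradiction in the spirit of \eqref{limsup contraty assertion}: if the inequality fails on a set $\Omega_1$ of positive probability, then on $\Omega_1$ there exist $\delta > 0$ and $T(\omega)$ with $\mathcal{I}_t \leq x - \delta$ for all $t \geq T$, and strict monotonicity of $f$ gives $f(\mathcal{I}_t) \geq f(x - \delta)$ on $[T,\infty)$. Dividing the integral identity by $t$, taking $\liminf$, and using $\liminf_{t\to\infty} Z_t/t \geq 0$, one obtains
$$\liminf_{t\to\infty}\frac{1}{t}\ln\frac{\mathcal{I}_t}{N-\mathcal{I}_t} \geq f(x - \delta) + N\liminf_{t\to\infty}\frac{Z_t}{t} \geq f(x-\delta),$$
while the bound $\mathcal{I}_t \leq x - \delta < N$ forces $\limsup G(\mathcal{I}_t)/t \leq 0$; playing $\delta$ against the position of the unique root of $f$ should then lead to a contradiction and force $\mathcal{I}_t \to N$ on $\Omega_1$, against the contradiction assumption. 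The symmetric claim $\liminf_{t\to\infty}\mathcal{I}_t \leq x$ is handled analogously, starting from the hypothesis $\mathcal{I}_t \geq x + \delta$ eventually, using the opposite monotonicity $f(\mathcal{I}_t) \leq f(x + \delta)$, and invoking $\limsup_{t\to\infty} Z_t/t < +\infty$ to push the $\limsup$ of $G(\mathcal{I}_t)/t$ below zero, which yields $\mathcal{I}_t \to 0$ and contradicts the lower bound on $\mathcal{I}_t$.

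The main obstacle I expect is the matching between the sign of $f(x\pm\delta)$ and the quantities $N\liminf Z_t/t$ and $N\limsup Z_t/t$ respectively: the monotonicity-based estimate alone gives a strictly signed control only when $x\pm\delta$ sits on the correct side of the unique root of $f$ in $]0,N[$, so extending the contradiction to \emph{every} $x \in \,]0,N[\,$ requires exploiting the freedom to make $\delta$ nearly as large as the gap produced by the contradiction assumption, combined with the full strength of \eqref{assumption Z}, to ensure the right sign in each case. Once this is done, both inequalities in \eqref{persistence} for Theorem \ref{Persistency Theorem} upgrade to the uniform-in-$x$ form stated here.
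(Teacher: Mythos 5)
Your overall route is the same as the paper's: apply It\^o's formula to $G(y)=\ln\left(\frac{y}{N-y}\right)$ so that the perturbation is isolated in the single term $NZ_t$, then argue by contradiction using the monotonicity of $f$ from \eqref{f(x)} together with the averaged bounds \eqref{assumption Z}. You have also correctly located the crux, namely the sign of $f(x\pm\delta)$ relative to the unique root $x^*=N(1-1/R_0^D)$ of $f$ in $]0,N[$. The problem is that the fix you propose --- ``making $\delta$ nearly as large as the gap produced by the contradiction assumption'' --- does not close this gap. Take the first claim with some $x>x^*$ and suppose $\limsup_{t\to\infty}\mathcal{I}_t=L$ with $x^*\leq L<x$ on a set of positive probability. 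The most the contradiction hypothesis can give you is $\mathcal{I}_t\leq L+\epsilon$ eventually, hence $f(\mathcal{I}_t)\geq f(L+\epsilon)$ with $f(L+\epsilon)\leq 0$; combined with $N\liminf_{t\to\infty}Z_t/t\geq 0$ this yields $\liminf_{t\to\infty}G(\mathcal{I}_t)/t\geq f(L+\epsilon)$, a \emph{nonpositive} lower bound that is perfectly compatible with your other observation $\limsup_{t\to\infty}G(\mathcal{I}_t)/t\leq 0$. No choice of $\delta$ produces a contradiction, so this argument only proves $\limsup_{t\to\infty}\mathcal{I}_t\geq x^*$. Symmetrically, for the second claim one needs $f(x+\delta)+N\limsup_{t\to\infty}Z_t/t<0$, and since \eqref{assumption Z} only bounds $\limsup_{t\to\infty}Z_t/t$ by some finite constant $c$, the argument only yields $\liminf_{t\to\infty}\mathcal{I}_t\leq f^{-1}(-Nc)$, not $\leq x$ for every $x\in\,]0,N[$.

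You should know that the paper's own proof has exactly the same weak point: it reduces matters to the conditions $N\liminf_{t\to\infty}Z_t/t\geq -f(x)$ and $N\limsup_{t\to\infty}Z_t/t\leq -f(x)$ and then asserts that these follow from \eqref{assumption Z} because ``the range of $f$ is $]-\infty,\nu]$''; but the first condition follows from $\liminf_{t\to\infty}Z_t/t\geq 0$ only when $f(x)\geq 0$, i.e.\ $x\leq x^*$, and the second follows from $\limsup_{t\to\infty}Z_t/t<+\infty$ only for $x$ large enough that $f(x)\leq -N\limsup_{t\to\infty}Z_t/t$. So your proposal reproduces the paper's argument faithfully, including its weakest step, and is commendably explicit about where the difficulty sits; what neither argument delivers, as written, is the stated conclusion for \emph{every} $x\in\,]0,N[$ --- that would require input beyond the averaged conditions \eqref{assumption Z}, for instance a recurrence argument for the process $G(\mathcal{I}_t)$.
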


\begin{proof}
	We now proceed by contradiction as in the proof of Theorem \ref{Persistency Theorem}. Similarly, we take $G(y):=\ln\left(\frac{y}{N-y}\right)$, for $y\in]0,N[$; an application of the It\^o formula gives 
	\begin{align*}
	dG(\mathcal{I}_t)=\left[\nu-(\gamma+\mu)\frac{\mathcal{I}_t}{N-\mathcal{I}_t}\right]dt+NdZ_t.
	\end{align*}
	We now fix $0<x<N$ and first prove that 
	\begin{align*}
	\limsup_{t\to\infty}\mathcal{I}_t\geq x,\quad\mbox{ almost surely}.
	\end{align*}
	Assume to the contrary that the event $\left\{\limsup_{t\to\infty}\mathcal{I}_t<x\right\}$ has positive probability. This means that there exists $T(\omega)\geq 0$ such that $\mathcal{I}_t<x$, for all $t\geq T(\omega)$; this, together with the monotonicity of \eqref{f(x)}, gives
	\begin{align*}
	\liminf_{t\to\infty}\frac{1}{t}\ln\left(\frac{\mathcal{I}_t}{N-\mathcal{I}_t}\right)\geq&\liminf_{t\to\infty}\frac{1}{t}\int_0^tf(\mathcal{I}_s)ds+N\liminf_{t\to\infty}\frac{Z_t}{t}\\
	>&f(x)+N \liminf_{t\to\infty}\frac{Z_t}{t}.
	\end{align*}
	Now, the core of Theorem \ref{Persistency Theorem} is the left hand side above being positive and this is guaranteed if
	\begin{eqnarray}\label{limsup condition persistency generalization}
	N\liminf_{t\to\infty}\frac{Z_t}{t}\geq-f(x).
	\end{eqnarray}
	In this case $\liminf_{t\to\infty}\frac{1}{t}\ln\left(\frac{\mathcal{I}_t}{N-\mathcal{I}_t}\right)>0$ and hence $\lim_{t\to\infty}\mathcal{I}_t=N$, contradicting our initial assumption. Since the range of $f$ is $]-\infty,\nu]$, condition (\ref{limsup condition persistency generalization}) is implied by the first inequality in (\ref{assumption Z}) and positivity of $\nu$.\\
	We now prove that 
	\begin{align*}
	\liminf_{t\to\infty}\mathcal{I}_t\leq x,\quad\mbox{almost surely}. 
	\end{align*}
	Assume to the contrary that the event $\left\{\liminf_{t\to\infty}\mathcal{I}_t> x\right\}$ has positive probability; this implies the existence of $T(\omega)>0$ such that $\mathcal{I}_t>x$, for all $t\geq T(\omega)$. Then,
	\begin{align*}
	\limsup_{t\to\infty}\frac{1}{t}\ln\left(\frac{\mathcal{I}_t}{N-\mathcal{I}_t}\right)&\leq\limsup_{t\to\infty}\int_0^tf(\mathcal{I}_s)ds+N\limsup_{t\to\infty}\frac{Z_t}{t}\\
	&<f(x)+N\limsup_{t\to\infty}\frac{Z_t}{t}.
	\end{align*}
	Now, the core of Theorem \ref{Persistency Theorem} is the left hand side above being negative and this is guaranteed if 
	\begin{align}
		\label{liminf condition persistence general}
		N\limsup_{t\to\infty}\frac{Z_t}{t}\leq-f(x)
	\end{align}
	In this case $\limsup_{t\to\infty}\frac{1}{t}\ln\left(\frac{\mathcal{I}_t}{N-\mathcal{I}_t}\right)<0$ and hence $\lim_{t\to\infty}\mathcal{I}_t=0$ which contradicts our initial assumption. Since the range of $f$ is $]-\infty,\nu]$, condition (\ref{liminf condition persistence general}) is implied by the last inequality in (\ref{assumption Z}).
\end{proof}

\begin{remark}
	If the process $\{Z_t\}_{t\geq 0}$ satisfies both \eqref{extinction condition} and \eqref{assumption Z}, then for $\nu\leq 0$ one has extinction and for $\nu>0$ one has persistence for the associated model. It is useful to note this is the case only when
	\begin{align}\label{remark general extinction and persistence}
		\lim_{t\to\infty}\frac{Z_t}{t}=0.
	\end{align}
	\label{remark}
\end{remark}

\subsection{Trajectory simulations}

\begin{figure}[b!]
	\centering
	\begin{subfigure}[c]{0.49\textwidth}
		\includegraphics[width=\linewidth]{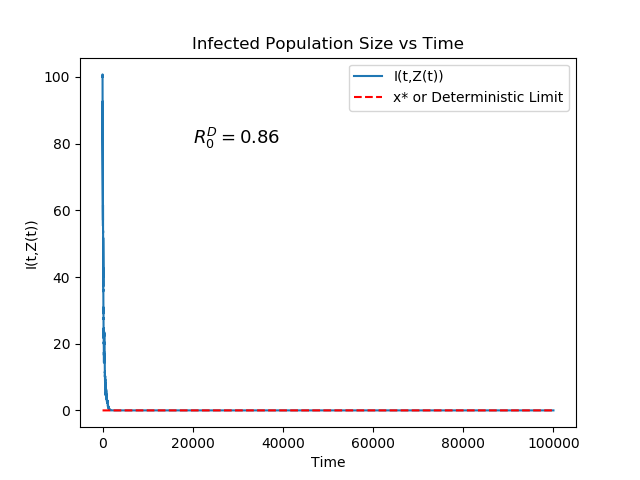}
		\caption{$R^D_0=0.857$}
		\label{Generalization negative alpha R^D_0=0.857}
	\end{subfigure}
	\begin{subfigure}[c]{0.49\textwidth}
		\includegraphics[width=\linewidth]{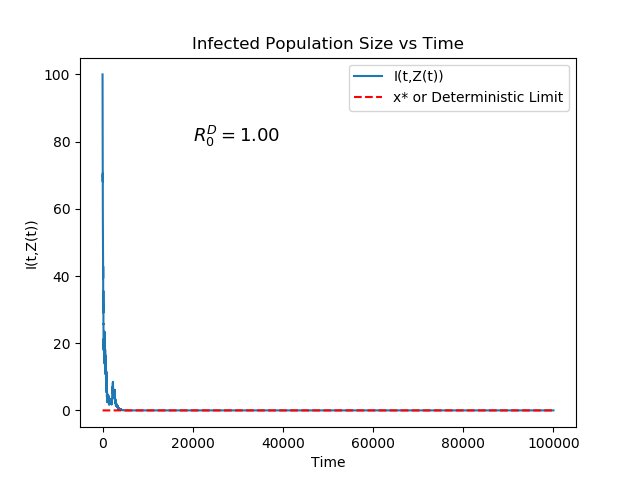}
		\caption{$R^D_0=1.000$}
		\label{Generalization negative alpha R^D_0=1.000}
	\end{subfigure}
	\newline
	\begin{subfigure}[c]{0.49\textwidth}
		\includegraphics[width=\linewidth]{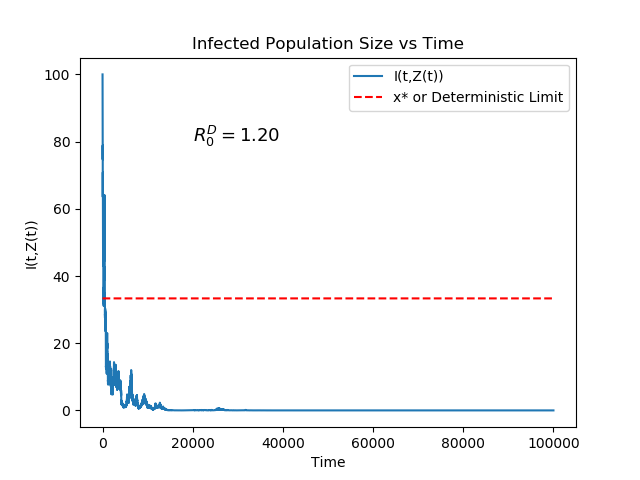}
		\caption{$R^D_0=1.200$}
		\label{Generalization negative alpha R^D_0=1.200}
	\end{subfigure}
	\begin{subfigure}[c]{0.49\textwidth}
		\includegraphics[width=\linewidth]{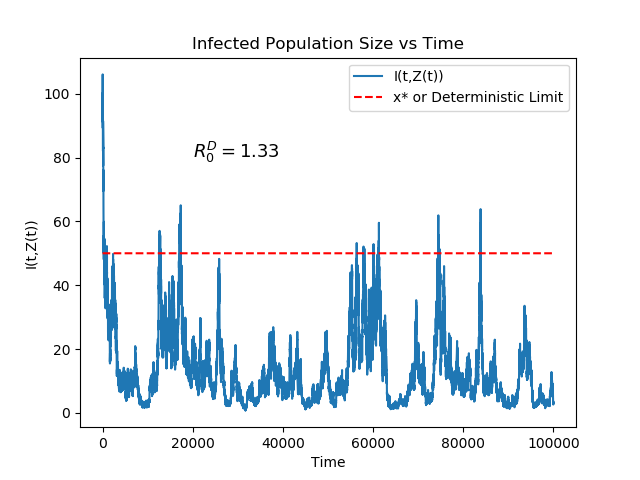}
		\caption{$R^D_0=1.333$}
		\label{Generalization negative alpha R^D_0=1.333}
	\end{subfigure}
	\caption{The plot with parameters $N=200$, $i_0=100$, $\alpha=-0.011$ and $\sigma=0.005$. The label for y-axis $I(t,Z_t)$ stands for $\mathcal{I}_t$.}
	\label{Generalization negative alpha}
\end{figure}

In this section a simple example will be given to support the theoretical results of Theorems \ref{Generalization Extinction Theorem} and \ref{Generalized Persistence Theorem}. We choose
\begin{align}\label{Z(t)}
	Z_t:=\alpha t+\sigma B_t
\end{align}
and note that
\begin{align*}
\limsup\limits_{t\to\infty}\frac{Z_t}{t}=\liminf\limits_{t\to\infty}\frac{Z_t}{t}=\alpha,\quad\mbox{ almost surely}.
\end{align*}
If $\alpha<0$, then the assumption of Theorem \ref{Generalization Extinction Theorem} is fulfilled while those of Theorem \ref{Generalized Persistence Theorem} are not. This means that for $R^D_0\leq 1$, or equivalently $\nu\leq 0$, the extinction is guaranteed; however, for $R^D_0> 1$, or equivalently $\nu>0$, the persistence of infection is not guaranteed.
\par Figure \ref{Generalization negative alpha} supports this claim. As one can see the panels \ref{Generalization negative alpha R^D_0=0.857} and \ref{Generalization negative alpha R^D_0=1.000} shows the extinction of the infection since Theorem \ref{Generalization Extinction Theorem} is satisfied. However, panels \ref{Generalization negative alpha R^D_0=1.200} and \ref{Generalization negative alpha R^D_0=1.333} shows both the examples of extinction and persistence even though the $\nu>0\:(R^D_0>1)$. Thus one looses the properties of the deterministic model.
\par A complementary analysis can be made for $\alpha>0$. In this case the assumption of Theorem \ref{Generalized Persistence Theorem} will be satisfied while those of Theorem \ref{Generalization Extinction Theorem} are not. This indicates that the infection will be persistent as long as $R^D_0>1$; however, the extinction of infection is not guaranteed for $R^D_0\leq1$. The results of Figure \ref{Generalization positive alpha} support this claim.
\begin{figure}[h]
	\centering
	\begin{subfigure}[c]{0.49\textwidth}
		\includegraphics[width=\linewidth]{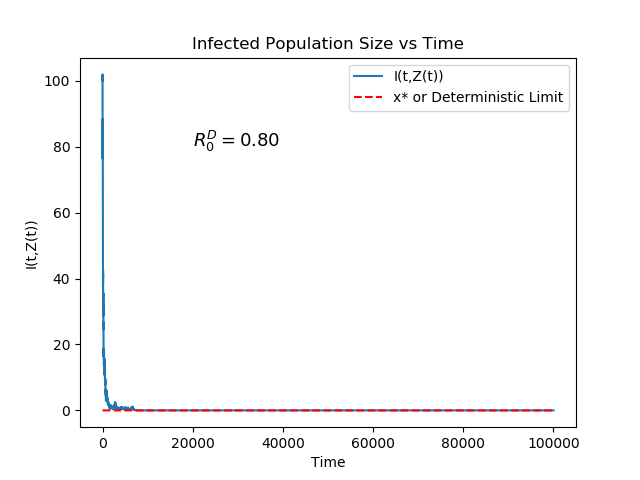}
		\caption{$R^D_0=0.800$}
		\label{Generalization positive alpha R^D_0=0.800}
	\end{subfigure}
	\begin{subfigure}[c]{0.49\textwidth}
		\includegraphics[width=\linewidth]{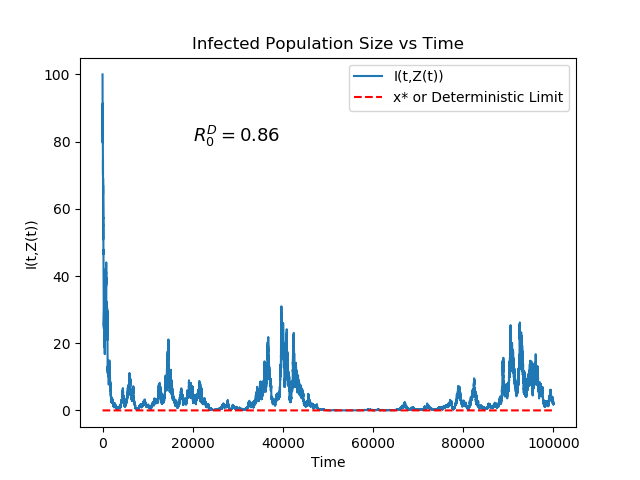}
		\caption{$R^D_0=0.857$}
		\label{Generalization positive alpha R^D_0=0.857}
	\end{subfigure}
	\newline
	\begin{subfigure}[c]{0.49\textwidth}
		\includegraphics[width=\linewidth]{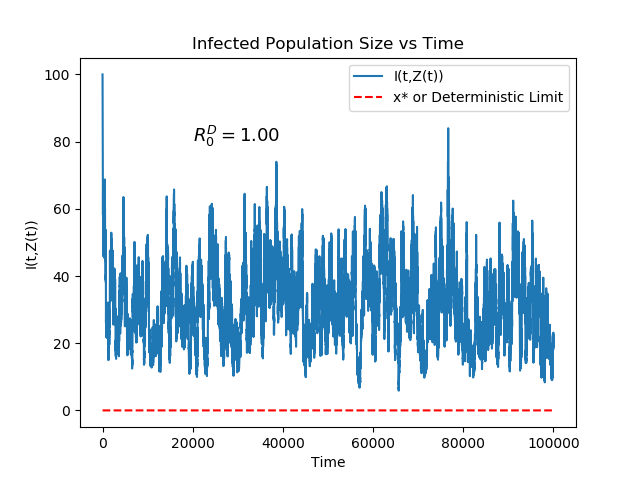}
		\caption{$R^D_0=1.000$}
		\label{Generalization positive alpha R^D_0=1.000}
	\end{subfigure}
	\begin{subfigure}[c]{0.49\textwidth}
		\includegraphics[width=\linewidth]{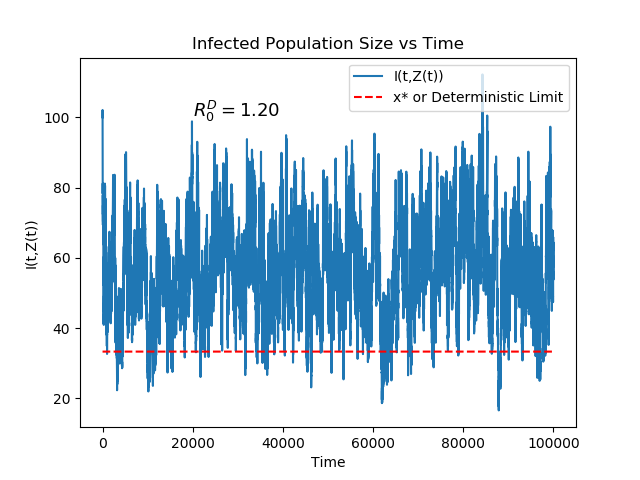}
		\caption{$R^D_0=1.200$}
		\label{Generalization positive alpha R^D_0=1.200}
	\end{subfigure}
	\caption{The plot with parameters $N=200$, $i_0=100$, $\alpha=-0.011$ and $\sigma=0.005$. The label for y-axis $I(t,Z_t)$ stands for $\mathcal{I}_t$.}
	\label{Generalization positive alpha}
\end{figure}
\par In the panel \ref{Generalization positive alpha R^D_0=1.200}, the infection is persistent as expected. However, the extinction is not seen in panels \ref{Generalization positive alpha R^D_0=1.000} and \ref{Generalization positive alpha R^D_0=0.857} since the sufficient condition in Theorem \ref{Generalization Extinction Theorem} is not satisfied. Because the condition was sufficient and not necessary, although it is not satisfied, the extinction can still happen for $R^D_0\leq1$, as shown in panel \ref{Generalization positive alpha R^D_0=0.800}.
\par One last important comment is that, when $\alpha=0$, then one has the same kind of perturbation of the $\beta$ parameter utilized in \cite{Gray et. al.}, so now $Z_t=\sigma B_t$. If the $\beta$ parameter is perturbed in the same way as this study and not in \cite{Gray et. al.}, since $\lim\limits_{t\to\infty}Z_t/t=0$, by the remark \ref{remark} it can be said that, for $R^D_0\leq1\:(\nu\leq0)$ cases there will be extinction and for cases $R^D_0>1\:(\nu>0)$ there will be persistence of infection. We would like to stress that conditions are not the same as published in \cite{Gray et. al.} because the perturbation method is different in this study.

\section{Discussion}

In this study we propose a new perturbation method for the disease transmission coefficient in SIS model. Our approach consists in acting directly on the explicit solution of the deterministic problem, thus avoiding delicate manipulations of differential quantities such as $dB_t$. Nevertheless, our model is cross validated at the level of differential equations once we smooth the perturbation and use the Wong-Zakai theorem. We first use this method with a perturbation of mean reverting type and properties of the corresponding model are analysed. Then, generalization to different sources of randomness are investigated.\\
When the deterministic SIS model \ref{Deterministic SIS Model} is perturbed with a mean reverting Ornstein-Uhlenbeck process $\{Y_t\}_{t\geq 0}$, the solution $\{\mathtt{I}_t\}_{t\geq 0}$ is shown to preserve the deterministic model's regimes for extinction and persistence. Namely:
\begin{itemize}
\item if $R^D_0\leq 1$, then we have extinction of infection;
\item if $R^D_0> 1$, then we have persistence of infection.	
\end{itemize}
Then, we identified some simple sufficient conditions on the class of possible perturbations which entail the same key feature for the corresponding models.
This study emphasizes that the methodology of perturbation of deterministic models is crucial in generating different stochastic versions. For further studies one can apply the same rationale to other parameters in the deterministic SIS model, i.e. $\gamma+\mu$, or to other models, such as Lotka-Volterra type of equations (\cite{Lotka},\cite{Volterra}).

\end{document}